\documentclass[10pt, reqno]{amsart}
\usepackage{mathscinet}
\usepackage[utf8]{inputenc}
\usepackage[english]{babel}

\usepackage[square,sort,comma,numbers]{natbib}

\usepackage[dvipsnames]{xcolor}
\usepackage{float}

\usepackage{graphicx}
\usepackage{tikz}
\usetikzlibrary{arrows}
\usetikzlibrary{matrix,arrows,decorations.pathmorphing}

\usepackage{amsmath}
\usepackage{amssymb}
\usepackage{amsthm}

\usepackage{stmaryrd}
\usepackage[all]{xy}
\usepackage[mathcal]{eucal}
\usepackage{verbatim}\usepackage{fullpage} \usepackage{wrapfig}
\usepackage{lipsum}
\usepackage[all]{xy}
\setlength{\parindent}{0.5cm}

\theoremstyle{plain}
\newtheorem{thm}{Theorem}[section]

\newtheorem{lem}[thm]{Lemma}

{\leftskip=1cm}

\newtheorem{maintheorem}{Theorem}

\theoremstyle{definition}

\newtheorem{exmp}{Example}

\newcommand{\nc}{\newcommand}
\nc{\dmo}{\DeclareMathOperator}
\DeclareMathOperator{\PConf}{PConf}

\DeclareMathOperator{\Conf}{Conf}

\DeclareMathOperator{\Ho}{H}
\DeclareMathOperator{\DD}{\mathbb{B}}
\DeclareMathOperator{\BB}{\mathbb{U}}
\DeclareMathOperator{\RR}{\mathbb{R}}

\newcommand{\ZZ}{\mathbb{Z}}

\nc{\para}[1]{\medskip\noindent\textbf{#1.}}

\title{Adding a point to configurations in closed balls}
\author{Lei Chen, Nir Gadish, and Justin Lanier}

\begin{document}

	\begin{abstract}
		We answer the question of when a new point can be added in a continuous way to configurations of $n$ distinct points in a closed ball of arbitrary dimension. We show that this is possible given an ordered configuration of $n$ points if and only if $n \neq 1$. On the other hand, when the points are not ordered and the dimension of the ball is at least 2, a point can be added continuously if and only if $n = 2$. These results generalize the Brouwer fixed-point theorem, which gives the negative answer when $n=1$. We also show that when $n=2$, there is a unique solution to both the ordered and unordered versions of the problem up to homotopy.
	\end{abstract}

	\maketitle
	
	\section{Introduction}
	
	Let $\DD^m$ be the closed ball of dimension $m$, with $m \geq 1$. This paper answers the following basic question:
	\begin{quotation}

		\emph{Is there a continuous rule that adds a new distinct point to every configuration of $n$ distinct points in $\DD^m$?}
	\end{quotation}
	The challenge here is that the new point must be distinct from all of the existing ones, and it is not clear whether such a choice can be made continuously.
	
	\begin{exmp}[\textbf{Case $n=1$: Brouwer fixed-point theorem}]\label{ex:case1}
		Given one point in $\DD^m$, a continuous choice of a second distinct point can be thought of as a continuous function from the closed ball to itself with no fixed points. By the Brouwer fixed-point theorem \cite{Brouwer} no such continuous function exists, and therefore introducing a second distinct point continuously is impossible.
	\end{exmp}
	When extending the Brouwer fixed-point theorem to $n>1$, the question splits into two versions: either the $n$ points are given with an ordering $(p_1, \ldots, p_n)$, or the points instead form an unordered set. The first author addressed both versions of this question with respect to point configurations lying in the infinite plane, in the $2$-sphere, and in closed surfaces $S_g$ with $g \geq 2$ in \cite{Chen} and in her joint work with Salter \cite{CS}.
	
	It is perhaps surprising that having more points to avoid does not necessarily make the task of adding a point harder.
	
	\begin{exmp}[\textbf{Case $n=2$: midpoint}]\label{ex:case2}
		Given two distinct points in $\DD^m$, their midpoint provides a continuous way to introduce a third point, distinct from the existing two. In fact, in \S\ref{sec:n=2} we show that this midpoint construction is the unique way to add a point when $n=2$ up to homotopy.
	\end{exmp}
	
	Let us state more formally the problem of continuously adding a point to a configuration of $n$ points in $\DD^m$, which from here on will be abbreviated to $\DD$. Let $\PConf_{n}(\DD)$ denote the \emph{pure configuration space} of $n$ distinct ordered points in $\DD$, topologized as a subspace of $(\DD)^n$. A continuous map $\PConf_{n}(\DD)\to \DD$ is said to `add a point' if the image of every configuration $(p_1,\ldots,p_n)$ is a point $p_0$ distinct from all $p_i$ with $i\geq 1$. Equivalently, consider the map $g_{m,n}:\PConf_{n+1}(\DD)\to \PConf_{n}(\DD)$ that forgets the $0$th point $(p_0,p_1,\ldots,p_n)\mapsto (p_1,\ldots,p_n)$. The problem of continuously introducing a new point is precisely the question of finding a continuous section for $g_{m,n}$.
	
	Next, the symmetric group $\Sigma_n$ acts on ordered configurations by permuting the indices, and the quotient $\Conf_n(\DD):=\PConf_{n}(\DD)/\Sigma_n$ is the \emph{configuration space} of $n$ distinct unordered points. The map $g_{m,n}$ forgetting the point $p_0$ is $\Sigma_n$-equivariant and descends to a map $\bar{g}_{m,n}: \PConf_{n+1}(\DD)/\Sigma_n \to \Conf_{n}(\DD)$. With this, our problem of adding a point distinct from a given unordered set of $n$ points is the problem of finding a section for $\bar{g}_{m,n}$.
	
	The easy positive result for $n=2$ in Example \ref{ex:case2} suggests that the problem for larger $n$ might also have a solution. However, this is where the two versions of the problem part ways: the ordered version extends the $n=2$ case by admitting similar solutions, while the unordered version reverts back to the behavior at $n=1$.
	
	\begin{maintheorem}[\textbf{Ordered}]\label{theorem:yessection}
		In dimension $m \geq 1$ and with $n \geq 1$ points, the forgetful map $g_{m,n}$ has a section if and only if $n \neq 1$. That is, one \underline{can} continuously add a new point to an ordered configuration exactly when it consists of $2$ or more ordered points.
	\end{maintheorem}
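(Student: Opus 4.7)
The statement is an ``if and only if,'' so there are two directions. The case $n=1$ is precisely Example~\ref{ex:case1}, which rules out a section via the Brouwer fixed-point theorem, so the substantive work is to construct, for each $n\geq 2$, an explicit continuous section of $g_{m,n}$.

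My plan is to use the ordering to single out a preferred direction and then displace $p_1$ by a controlled amount along it. Given a configuration $(p_1,\ldots,p_n)\in\PConf_n(\DD)$, the points $p_1$ and $p_2$ are distinct, so
\[
v := \frac{p_2-p_1}{\|p_2-p_1\|}
\]
is a well-defined unit vector depending continuously on the configuration. Set $\rho := \min_{2\leq i\leq n}\|p_1-p_i\|$, which is a strictly positive, continuous function on $\PConf_n(\DD)$. Then define the candidate section by $s(p_1,\ldots,p_n):=(p_0,p_1,\ldots,p_n)$, where
\[
p_0:=p_1+\tfrac{\rho}{2}\,v.
\]

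Three properties need verification. First, $p_0\in\DD$: since $\rho\leq\|p_1-p_2\|$, we can write $p_0=(1-t)p_1+t\,p_2$ with $t=\rho/(2\|p_1-p_2\|)\in(0,\tfrac12]$, and convexity of $\DD$ gives $p_0\in\DD$. Second, $p_0\neq p_1$ because $\|p_0-p_1\|=\rho/2>0$. Third, for $i\geq 2$ the triangle inequality yields $\|p_0-p_i\|\geq\|p_1-p_i\|-\|p_0-p_1\|\geq\rho-\rho/2=\rho/2>0$. Continuity of $s$ is manifest from its explicit formula. For $n=2$, the construction reduces to the midpoint of $p_1$ and $p_2$, recovering Example~\ref{ex:case2}.

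Once one fixes on the idea of displacing $p_1$ toward $p_2$ by less than half the minimum distance to the other points, the construction is essentially routine, and I expect no real obstacle in carrying it out. The only delicate case is $n=1$, where no preferred direction exists and the Brouwer fixed-point theorem obstructs any continuous choice; this is the genuine content of the ``only if'' direction.
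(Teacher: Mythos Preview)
Your proof is correct and is essentially identical to the paper's: both handle $n=1$ via Brouwer and, for $n\geq 2$, add the point $p_0 = p_i + \frac{d_i}{2\|p_j-p_i\|}(p_j-p_i)$ where $d_i$ is the minimum distance from $p_i$ to the other points (you take $i=1$, $j=2$). If anything, you spell out the verifications of $p_0\in\DD$ and $p_0\neq p_i$ a bit more carefully than the paper does.
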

	
	\begin{maintheorem}[\textbf{Unordered}]\label{theorem:nosection}
		In dimension $m \geq 2$ and with $n \geq 1$ points, the forgetful map $\bar{g}_{m,n}$ has a section if and only if $n=2$. That is, one \underline{cannot} continuously add a new point to an unordered configuration unless it consists of precisely $2$ unordered points.
	\end{maintheorem}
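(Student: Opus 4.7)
The ``if'' direction is immediate from Example~\ref{ex:case2}: the midpoint $\tfrac{1}{2}(p_1+p_2)$ is symmetric in its inputs and distinct from both, yielding a section of $\bar{g}_{m,2}$. For the ``only if'' direction, $n=1$ is handled by the Brouwer fixed-point theorem (Example~\ref{ex:case1}), so the substantive content is ruling out sections when $n\geq3$.

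I would argue by contradiction. Assuming $\bar{s}$ is a section for some $n\geq3$, composition with the quotient map $\PConf_n(\DD^m)\to\Conf_n(\DD^m)$ produces a $\Sigma_n$-invariant continuous $\phi:\PConf_n(\DD^m)\to\DD^m$ with $\phi(p_1,\ldots,p_n)\notin\{p_1,\ldots,p_n\}$. Since Theorem~\ref{theorem:yessection} already supplies such a $\phi$ without the invariance condition, the obstruction must come entirely from the $\Sigma_n$-equivariance. My plan is to reduce to three moving points by fixing $n-3$ ``anchor'' points near the boundary of $\DD^m$ and restricting attention to configurations where $p_1,p_2,p_3$ vary in a sub-ball $U\subset \DD^m$ disjoint from the anchors. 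The resulting $\Sigma_3$-invariant map $\Psi:\PConf_3(U)\to\DD^m$ would then be analyzed on the sub-family of equilateral triangle configurations parametrized by center $c\in U$, a fixed small radius, and orientation $\theta$. The $\Sigma_3$-invariance translates into $\ZZ/3$-periodicity in $\theta$, so as $\theta$ sweeps one fundamental domain, $\Psi$ traces a closed loop in $\DD^m$ avoiding the three rotating vertices. The aim is to extract from this loop, varied over $c\in U$, a continuous self-map of a ball without fixed points, yielding a Brouwer-type contradiction.

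\textbf{Main obstacle.}
The central difficulty is turning the $\ZZ/3$-symmetric loop of outputs into a concrete nonzero obstruction that uses the $\Sigma_n$-equivariance in an essential way (since Theorem~\ref{theorem:yessection} shows any argument that ignores equivariance must fail), and doing so uniformly in the dimension $m$. For $m=2$, the obstruction should be fundamentally fundamental-group-theoretic: it ought to amount to the non-splitting of the short exact sequence $1\to F_n\to\pi_1(\PConf_{n+1}(\DD^2)/\Sigma_n)\to B_n\to 1$, which in principle can be attacked by playing off the braid relations against any candidate splitting. For $m\geq3$, the relevant fundamental groups are discrete and the obstruction must instead come from $\pi_{m-1}$ of the fiber $\bigvee^n S^{m-1}$, producing a degree-theoretic invariant living in twisted cohomology. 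Producing a single unified argument, or else cleanly partitioning the proof along the planar and higher-dimensional regimes, is where I expect the real work to lie.
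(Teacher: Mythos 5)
Your handling of $n=1$ and $n=2$ matches the paper. However, your plan for $n\geq3$ has a fundamental conceptual problem that the paper explicitly warns about, and it is worth understanding clearly because it is the crux of the whole theorem.

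The obstructions you propose -- non-splitting of the short exact sequence $1\to F_n\to\pi_1(\PConf_{n+1}(\DD^2)/\Sigma_n)\to B_n\to 1$ in dimension $2$, and a degree-theoretic class in twisted cohomology for $m\geq3$ -- are invariants of the map $\bar g_{m,n}$ up to ($\Sigma_n$-equivariant) homotopy. But the configuration spaces of the closed ball $\DD$ and of the open ball (equivalently $\RR^m$) are $\Sigma_n$-equivariantly homotopy equivalent, and for $\RR^m$ the forgetful map \emph{does} admit a section: add a point far away from all the others. Consequently your exact sequence \emph{does} split, and any homotopy-theoretic invariant of $\bar g_{m,n}$ must vanish. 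The paper calls this out explicitly: ``there is no purely homotopy theoretic obstruction to finding a section in the case of a closed ball,'' and moreover $g_{m,n}$ fails to be a fibration. So the ingredient your argument is missing is not a cleverer homotopy invariant; it is some essential use of the \emph{boundary} of the closed ball, which is precisely what is lost when you pass to $\RR^m$.

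Two further gaps. First, the anchor-point reduction does not actually reduce to the $n=3$ case: once $p_4,\ldots,p_n$ are frozen near $\partial\DD$, the $\Sigma_3$-invariant map you obtain takes values in all of $\DD^m$, not in the small sub-ball $U$; the problem of choosing a point in $\DD^m$ avoiding three points confined to a tiny $U$ is much easier (there is plenty of room) and is not equivalent to a point-adding problem on $\Conf_3$. Second, the Brouwer-style endgame you gesture at never explains how the $\ZZ/3$-periodic loop in $\theta$ is converted into a fixed-point-free self-map of a ball, and any candidate construction would again have to survive the transition to $\RR^m$, where we know there is no contradiction to be found. The paper's proof avoids all of this by restricting to the subspace $B^1_n$ of configurations with exactly one marked point on $\partial\DD$, extracting a nontrivial class $X_1$ pulled back from $S^{m-1}$, and then pitting $\Sigma_n$-equivariance against the Arnol'd--Cohen relations to force the impossible integer equation $\lambda(n-1)=1$. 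The boundary restriction is the step that breaks the homotopy equivalence with the open ball and is the key idea your proposal lacks.
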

	
	As for the exceptional case of $n=2$,
	\begin{maintheorem}[\textbf{Uniqueness for} $n=2$]\label{thm:uniqueness}
		In every dimension $m\geq 1$, the midpoint construction of Example \ref{ex:case2} homotopically unique. That is, every section of $g_{m,2}$ or $\bar{g}_{m,2}$ is homotopic to the midpoint construction via a homotopy through sections.
	\end{maintheorem}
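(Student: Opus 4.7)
The plan is to show that every section $\sigma$ of $g_{m,2}$ (or $\bar{g}_{m,2}$) is homotopic through sections to the midpoint $\mu(p_1,p_2)=(p_1+p_2)/2$. The first attempt is the straight-line homotopy $H_t(p_1,p_2) = (1-t)\sigma(p_1,p_2)+t\mu(p_1,p_2)$, which lands in $\DD^m$ by convexity. A direct calculation shows that $H_t(p_1,p_2)=p_i$ for some $t\in (0,1)$ if and only if $\sigma(p_1,p_2)$ lies on the line through $p_1,p_2$ strictly outside the segment $[p_1,p_2]$; call such values of $\sigma$ ``bad.'' It therefore suffices to first deform $\sigma$ through sections to one that avoids all bad values, and then apply the straight-line homotopy.

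In dimension $m=1$, the bad situation cannot arise. The space $\PConf_2(\DD^1)$ has two components according to the order of $p_1,p_2$; on the component $\{p_1<p_2\}$, the total space of $g_{1,2}$ over this component has three connected components, corresponding to $p_0<p_1$, $p_1<p_0<p_2$, or $p_0>p_2$. Of these, only the middle component survives at the boundary configurations $p_1=-1$ or $p_2=1$, so any continuous section is forced by connectedness to land in the middle component. In particular $\sigma(p_1,p_2)\in(p_1,p_2)$ always, so the straight-line homotopy is a homotopy through sections; the other component is handled symmetrically.

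For $m\geq 2$, removing the bad values is more delicate. The natural idea is to project $\sigma$ onto the perpendicular bisector of $\overline{p_1p_2}$, a convex, $\Sigma_2$-invariant slice of $\DD^m$ that contains $\mu$ and avoids $\{p_1,p_2\}$. The linear homotopy realizing this orthogonal projection, however, hits $p_i$ precisely when $\sigma$ lies on the same bad rays as above, so one must first perturb $\sigma$ off the line $\overline{p_1p_2}$ wherever it lands there. Once this perturbation is accomplished, either the bisector projection or the direct straight-line homotopy to $\mu$ produces the desired deformation through sections.

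Since the midpoint and perpendicular bisector are both $\Sigma_2$-invariant, running the argument equivariantly settles the unordered case: sections of $\bar{g}_{m,2}$ lift to $\Sigma_2$-equivariant sections of $g_{m,2}$, and all deformations above can be chosen to respect the swap. The main obstacle I anticipate is constructing the preliminary ``push off the line'' homotopy continuously over the non-compact base $\PConf_2(\DD^m)$: a naive approach would require a nowhere-vanishing continuous vector field perpendicular to $p_2-p_1$, which may fail to exist globally for certain $m$ by hairy-ball-type obstructions. I expect the resolution uses a more careful local construction near the bad locus, which has positive codimension in $\PConf_2(\DD^m)\times\DD^m$; alternatively, obstruction theory applied to the Fadell--Neuwirth fibration $g_{m,2}$ may directly show that the space of sections is path-connected.
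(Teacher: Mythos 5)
Your opening reduction is correct and matches the paper's: the straight-line homotopy to the midpoint fails exactly when the section's value lies on the line through $p_1,p_2$ but outside the segment, so it suffices to first deform the section off that bad locus. Your $m=1$ argument by counting components of $g_{1,2}^{-1}(\{p_1<p_2\})$ and noting that only the middle one admits a section over the whole base is clean and valid. The gap is in $m\geq 2$, and you flag it yourself: the ``push off the line'' step is never carried out. This is not a technicality one can wave away. Choosing a nonzero vector perpendicular to $p_2-p_1$ continuously over (a neighborhood of) the bad locus is precisely the kind of section-of-a-sphere-bundle problem that can be obstructed; since $\PConf_2(\DD^m)$ deformation retracts to $S^{m-1}$ via the Gauss map and the relevant bundle is essentially $TS^{m-1}$, you should expect hairy-ball trouble when $m$ is odd, as you suspected. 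Moreover, the alternative you propose --- obstruction theory for the Fadell--Neuwirth fibration --- does not apply here, because on the \emph{closed} ball $g_{m,2}$ is not a fibration: the fiber over $(p_1,p_2)$ is $\DD\setminus\{p_1,p_2\}$, whose homotopy type changes according to how many of $p_1,p_2$ lie on $\partial\DD$. This is exactly the obstacle the paper highlights as the reason homotopy-theoretic shortcuts fail.

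The paper sidesteps all of this with a geometric trick you did not find. Given $c=(p_1,p_2)$, let the line through them meet $\partial\DD$ at $q_1,q_2$, and choose the dilation center $x^c$ on that line so that a single linear scaling $h^c_t$ centered at $x^c$ carries $p_i$ to $q_i$ at time $t=1$. Conjugating $s$ by this scaling,
\[
s_t(c) := (H^c_t)^{-1}\circ s\circ H^c_t(c),
\]
produces a homotopy through sections (the input configuration $(p_1,p_2)$ is held fixed because we expand, apply $s$, and contract back). At time $t=1$ the section is applied to a configuration with both $p_1,p_2$ on $\partial\DD$, so if the added point is collinear with them it is forced to lie strictly between them; hence $s_1$ avoids the bad locus and the straight-line homotopy to the midpoint finishes the job. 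Everything here is $\Sigma_2$-equivariant and requires no auxiliary vector field, so it works uniformly in all $m\geq 1$. Your proposal identifies the right reduction but lacks this key construction, so as written the $m\geq 2$ case is incomplete.
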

	
	Note that the case of points on a line segment ($m=1$) is excluded from Theorem \ref{theorem:nosection}. In this case, the unordered version coincides with the ordered one, as the points in an unordered configuration are nevertheless forced into a linear order. We may therefore add a point continuously to a configuration of $n$ points in $\DD^1$ as long as $n \neq 1$.
	
	Recasting Theorem \ref{theorem:nosection} as a direct generalization of Brouwer's fixed-point theorem one can say that, except for when $n=2$, every continuous map $f:\Conf_n(\DD) \rightarrow \DD$ has a `fixed point'. This is meant in the sense that there must exist some configuration $S=\{p_1,\ldots,p_n\}$ whose image under $f$ lies inside $S$.
	
	We remark that the negative results in Theorem \ref{theorem:nosection} contrasts with the analogous problem of introducing a new point to an unordered configuration of points in $\RR^m$ (or equivalently, on the \emph{open} ball). This latter problem always has a solution: add a point `at infinity', i.e., place it very far away from all the others  \cite{Chen}. Such a construction is of course not possible on the closed ball. However, since the configuration spaces of the open and closed balls are homotopy equivalent (see \S\ref{sec:unordered}), there is no purely homotopy theoretic obstruction to finding a section in the case of a closed ball. This means that a different approach is needed. Even more, many standard tools of algebraic topology fail in our context since the forgetful map $g_{m,n}$ is not a fibration: it has fibers of distinct homotopy types, depending on how many points lie on the boundary of the ball.
	
	\para{Outline} We treat the easy ordered version of the problem in \S\ref{sec:ordered} by briefly proving Theorem \ref{theorem:yessection}. The main argument of our proof of Theorem \ref{theorem:nosection} proceeds by contradiction. Any section of $\bar{g}_{m,n}$ induces an $\Sigma_n$-equivariant section of $g_{m,n}: \PConf_{n+1}(\DD) \to \PConf_{n}(\DD)$. Theorem \ref{theorem:nosection} is then proved by pulling back a cohomology class in two different ways and arriving at a contradiction. We conclude by proving Theorem \ref{thm:uniqueness} in \S\ref{sec:n=2}.

	\para{Acknowledgements} We thank Benson Farb and Dan Margalit for providing useful comments and for their continued encouragement and support. We also thank the anonymous referee for a number of suggestions that improved the paper. JL is supported by the NSF grant DGE-1650044.
	
	\section{Case of ordered configurations}
	\label{sec:ordered}
	
	In this section we prove Theorem \ref{theorem:yessection} by generalizing the midpoint construction of Example \ref{ex:case2}.
	
	\begin{proof}[Proof of Theorem \ref{theorem:yessection}]
		The case $n=1$ is covered by Example \ref{ex:case1}. Otherwise, fix $n\geq 2$. For any pair $(i,j)$ with {$1\leq i, j \leq n$ and $i\neq j$}, the line segment $[p_i,p_j]$ is contained within $\DD$. Let $v_{ij}:= p_j-p_i$ be the vector pointing from $p_i$ to $p_j$. Further, let $d_i:= \min{\{\lVert p_k-p_i \rVert  : k\neq i\}}$ be the minimal distance between $p_i$ and any other point in the configuration. One can then introduce a new point ``close" to $p_i$, lying at a distance ${d_i}/2$ from $p_i$ along the interval $[p_i,p_j]$: explicitly, $p_0 := p_i + \frac{d_i}{2\lVert v_{ij} \rVert}v_{ij}$. The added point $p_0$ is clearly distinct from all other points, and it depends on the configuration continuously since $d_i$ and $v_{ij}$ do so.
	\end{proof}
	
	We remark that the first author gave a prior classification of sections between configuration spaces (see the discussion preceding \cite[Theorem 1.1]{Chen}). Under this classification, the above construction of adding a point is of the type `add close to $p_i$'. Let us also call attention to the fact that the above construction relies on singling out two of the points, which cannot be done continuously in the unordered version of the problem when $n > 2$.
	
	\section{Case of unordered configurations}
	\label{sec:unordered}
	
	In this section we prove Theorem \ref{theorem:nosection}, which says that in dimension $m \geq 2$ there is no section of $\bar{g}_{m,n}$ except when $n=2$. We begin with a few preliminary observations.
	
	First, generators and relations for the integral cohomology ring of $\PConf_n(\RR^m)$ were produced by Arnol'd in dimension $m=2$ and by Cohen for general $m$ \cite{Arnold, cohen1988artin}. This cohomology ring is generated by classes 
	\[
	G_{ab}\in \Ho^{m-1}(\PConf_n(\RR^m);\ZZ)
	\]
	for distinct $1\leq a< b \leq n$, where the generator $G_{ab}$ measures the winding of point $a$ around point $b$.	The induced $\Sigma_n$-action is given by $\sigma^*(G_{ab})=G_{\sigma(a)\sigma(b)}$, where $G_{ba}=(-1)^m G_{ab}$.
	This description applies equally well to the closed ball as the following argument shows. Since $\RR^m$ is homeomorphic to the open unit ball $\BB$, their configuration spaces are also equivariantly homeomorphic. Then scaling by $0<t< 1$ gives a sequence of inclusions
	\[
	t\BB \subset t\DD \subset \BB \subset \DD
	\]
	with compositions isotopic to the identity. It follows that $\BB$ and $\DD$ have equivariantly homotopy equivalent configuration spaces, and in particular they have the same cohomology.
	
	Our second observation is that any section $\bar{s}$ of $\bar{g}_{m,n}$ for the unordered configuration spaces lifts to a $\Sigma_n$-equivariant section $s$ of $g_{m,n}$ for the ordered spaces. This follows from the lifting criterion for connected coverings.
	
	Third, we leverage the fact that an equivariant section $s$ of $g_{m,n}$ induces a solution to a related section problem where the configurations have the added restriction that the point $p_1$ is constrained to the boundary sphere. This solution gives us a map, whose pullback on cohomology we compute in two ways, leading to a contradiction. More precisely, let $\BB\subset \DD$ denote the interior of the closed ball and consider the subspace of $\PConf_{n}(\DD)$ in which only the $1$st point lies on the boundary sphere $S^{m-1}$:
	\[
	B^1_{n} := \{ (p_1,\ldots,p_n)\in \PConf_{n}(\DD) \mid p_1\in \partial \DD,\, (p_2,\ldots,p_n)\in \PConf_{n}(\BB) \} \cong S^{m-1}\times \PConf_{n-1}(\BB)
	\]
	Define $B^1_{n+1}\subseteq \PConf_{n+1}(\DD)$ similarly. Lastly, let $E^1_{n+1}$ denote the preimage $g_{m,n}^{-1}(B^1_n)$ and consider the inclusion $B^1_{n+1}\hookrightarrow E^1_{n+1}$. The difference between these two spaces is that in the larger space the additional point $p_0$ may lie on the boundary $\partial \DD$, while in the smaller space this is not allowed. Despite this apparent difference, we have the following.
	
	\begin{lem} \label{lem:B=E}
		The inclusion $B^1_{n+1}\hookrightarrow E^1_{n+1}$ is a homotopy equivalence.
	\end{lem}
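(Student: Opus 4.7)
The strategy is to exhibit an explicit homotopy inverse to the inclusion. Note that $B^1_{n+1}$ and $E^1_{n+1}$ differ only in whether the added point $p_0$ is allowed to lie on the boundary sphere $\partial \DD$; every other constraint on the configuration is identical. My plan is to define a map $R : E^1_{n+1} \to B^1_{n+1}$ that radially pushes $p_0$ a small distance inward off the boundary, with the amount of push depending continuously on the configuration so as to avoid all the fixed points $p_1,\ldots,p_n$.

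More precisely, for each $\mathbf{p} = (p_1,\ldots,p_n) \in B^1_n$, I would set $\delta(\mathbf{p}) := \tfrac{1}{4}\min_{i\geq 2}(1 - \lVert p_i \rVert) > 0$, a continuous function on $B^1_n$, and introduce a radial contraction $\rho_\delta : \DD \to \DD$ of the form $\rho_\delta(x) := \phi_\delta(\lVert x \rVert)\, x/\lVert x \rVert$ (and $\rho_\delta(0)=0$), where $\phi_\delta(r) = r$ for $r \leq 1-2\delta$ and $\phi_\delta$ interpolates linearly to $\phi_\delta(1) = 1-\delta$. By construction $\rho_\delta$ fixes the closed ball of radius $1-2\delta$ pointwise, so it fixes each $p_i$ with $i \geq 2$, and its image lies in the closed ball of radius $1-\delta \subset \BB$, thereby avoiding $p_1 \in \partial\DD$. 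The proposed homotopy inverse is then $R(p_0,p_1,\ldots,p_n) := (\rho_\delta(p_0),p_1,\ldots,p_n)$, which is well defined and continuous since $\phi_\delta$ is strictly increasing, forcing $\rho_\delta(p_0) = p_i$ to imply $p_0 = p_i$.

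To complete the argument I would connect $R$ and the identity by the straight-line homotopy $H_t(x) := (1-t)x + t\rho_\delta(x)$, which is a positive radial rescaling $H_t(x) = \lambda(t,x)\, x$ with $0 < \lambda(t,x) \leq 1$; applied fiberwise to $p_0$ this yields candidates for the two required homotopies $i \circ R \simeq \mathrm{id}_{E^1_{n+1}}$ and $R \circ i \simeq \mathrm{id}_{B^1_{n+1}}$. The main technical obstacle, and indeed essentially the only nontrivial step, is verifying that the homotopy remains inside the relevant configuration space throughout, i.e., that $H_t(p_0) \neq p_i$ for every $t \in [0,1]$ and every $i$. Since $\lambda(t,x) \leq 1$, a collision $H_t(x) = p_i$ would force $x$ to be a positive scalar multiple of $p_i$ with $\lVert x \rVert \geq \lVert p_i \rVert$; a short case analysis splitting on whether $\lVert x \rVert \leq 1-2\delta$ or not then forces $x = p_i$ when $i \geq 2$, while a collision with $p_1 \in \partial\DD$ is ruled out since $H_t(\DD) \subseteq \DD$ with equality $\lVert H_t(x)\rVert = 1$ only at $t=0$, $x=p_1$. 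With this verification in hand, $R$ is a homotopy inverse to the inclusion, as required.
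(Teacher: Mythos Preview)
Your argument is correct. The paper takes a shorter route: instead of a radial push toward the origin applied only to $p_0$ (which forces you to introduce the configuration-dependent cutoff $\delta$ and then run a case analysis to rule out collisions), it uses the flow of the vector field $-(p-p_1)$ on $\DD$, i.e., the linear contraction $p\mapsto e^{-t}(p-p_1)+p_1$ toward the boundary point $p_1$, applied diagonally to all coordinates. This choice automatically fixes $p_1$, sends every other point of $\DD$ into the interior $\BB$ for $t>0$ by strict convexity of the ball, and, being injective, preserves distinctness of points---so the whole collision discussion disappears. Your construction has the mild advantage of moving only $p_0$ and hence giving a homotopy equivalence over $B^1_n$, but the paper only needs an ordinary homotopy equivalence here.
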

	\begin{proof}
		Choosing an inward-pointing vector field that vanishes on the point $p_1\in \partial \DD$, one can push any other point into the interior of $\DD$. Explicitly, the vector field $-(p-p_1)$ on $\DD$ gives rise to a smooth vector field on $\PConf_{n+1}(\DD)$. Its flow produces an isotopy $\Phi^1_t$ such that for all $t>0$,
		\[
		\Phi^1_t(B^i_{n+1})\subset \Phi^1_t(E^i_{n+1}) \subset B^1_{n+1} \subset E^1_{n+1}
		\]
		thus establishing the homotopy equivalence.
	\end{proof}
	
	Consider the compatible projections onto the $1$st coordinate
	\[
	\xymatrix{
		B^1_{n+1} \ar[dr] \ar[d] _{g_{m,n}} & \\
		B^1_{n} \ar[r] & S^{m-1}.
	}
	\]
	Pulling back an orientation class $[S^{m-1}]\in \Ho^{m-1}(S^{m-1})$, we get a class $0\neq X_1\in \Ho^{m-1}(B^1_n)$ whose pullback will also be abusively denoted by $X_1\in \Ho^{m-1}(B^1_{n+1})$. These classes measure how many times the point $p_1$ wraps around the boundary sphere.
	
	\begin{lem} \label{lem:pullback}
		Under the inclusion $\iota: B^1_{n+1} \hookrightarrow \PConf_{n+1}(\DD)$ we have
		\[
		\iota^*(G_{1a}) = X_1 \text{ for all } 1<a
		\]
		and the class $G_{ab}$ for $1< a,b$ pulls back to $G_{ab}\in \Ho^{m-1}(\PConf_{n}(\BB))$. The same is true for $B^1_n\subseteq \PConf_{n}(\DD)$.
	\end{lem}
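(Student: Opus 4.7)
The plan is to realize each generator $G_{ab}$ concretely as the pullback of the orientation class $[S^{m-1}]$ via the Gauss map
\[
\gamma_{ab}: \PConf_{n+1}(\DD)\to S^{m-1}, \qquad (p_0,p_1,\ldots,p_n)\mapsto \frac{p_a-p_b}{\lVert p_a-p_b\rVert},
\]
which is well defined because configuration points are distinct. The homotopy equivalence between the configuration spaces of $\DD$ and $\RR^m$ noted at the start of the section, together with naturality, ensures that this Gauss-map description recovers the Arnol'd--Cohen generators. The class $X_1\in \Ho^{m-1}(B^1_{n+1})$ is by construction the pullback of $[S^{m-1}]$ via the projection $(p_0,\ldots,p_n)\mapsto p_1 \in \partial\DD=S^{m-1}$. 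With this setup, the lemma reduces to identifying these maps up to homotopy after restriction along $\iota$.

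For $G_{ab}$ with $1<a,b$: after restriction to $B^1_{n+1}$ the map $\gamma_{ab}$ does not involve $p_1$ at all, so it factors through the projection $B^1_{n+1}\to \PConf_n(\BB)$ that forgets $p_1$. The resulting map on $\PConf_n(\BB)$ is exactly the Gauss map whose pullback defines $G_{ab}\in \Ho^{m-1}(\PConf_n(\BB))$, and hence $\iota^*(G_{ab})$ is the pullback of that class via the projection, as claimed.

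For $G_{1a}$ with $a>1$: I will connect $\gamma_{1a}\circ \iota$ to the projection $(p_0,\ldots,p_n)\mapsto p_1$ via the straight-line homotopy
\[
H_t(p_0,\ldots,p_n)=\frac{p_1-t\,p_a}{\lVert p_1-t\,p_a\rVert}, \qquad t\in[0,1].
\]
The denominator never vanishes since $\lVert p_1\rVert=1>\lVert p_a\rVert\geq t\lVert p_a\rVert$ on $B^1_{n+1}$, so $H_t$ is a valid homotopy from the projection to $p_1$ at $t=0$ to $\gamma_{1a}\circ\iota$ at $t=1$. Pulling back $[S^{m-1}]$ therefore gives $\iota^*(G_{1a})=X_1$. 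The only real issue in the whole proof is justifying that the Gauss-map description is legitimate on $\PConf_n(\DD)$, which is immediate from the discussion preceding the lemma; the remainder is a convex-combination homotopy, which succeeds precisely because the boundary point has norm strictly greater than any interior point. The argument for the inclusion $B^1_n\subseteq \PConf_n(\DD)$ is identical, with the coordinate $p_0$ simply removed throughout.
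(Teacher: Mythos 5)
Your proof is correct and follows essentially the same approach as the paper: both realize $G_{ab}$ via the Gauss map, observe that for $1<a,b$ it factors through the forgetful projection to $\PConf_n(\BB)$, and for $G_{1a}$ homotope $p_a$ to the origin so that the Gauss map degenerates to the projection onto $p_1$. The only difference is that you write down the straight-line homotopy $H_t$ explicitly and verify nonvanishing of the denominator, whereas the paper simply asserts the homotopy exists; this is a welcome bit of extra rigor but not a different argument.
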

	
	Via the homotopy equivalence $B^1_{n+1}\simeq E^1_{n+1}$, we consider Lemma \ref{lem:pullback} as a statement about $E^1_{n+1}$ as well. In particular we shall keep the notation $X_1$ for the corresponding class in $\Ho^{m-1}(E^1_{n+1})$.
	
	\begin{proof}[Proof of Lemma \ref{lem:pullback}]
		These facts are geometrically obvious: the class $G_{ab}$ is pulled back from $S^{m-1}$ under the `Gauss map', sending a configuration to the direction vector from $p_a$ to $p_b$. When $1< a,b$, this Gauss map factors through the projection $B^1_{n+1} \to \PConf_{n}(\BB)$, as claimed.
		
		Otherwise, if $1< a$ then since $p_1$ lies on the boundary and $p_a$ is internal, the Gauss map is homotopic to a map in which $p_a$ is fixed at the origin. But when $p_a=0$ the Gauss map coincides with the projection which records only $p_1$.
	\end{proof}
	
	With these facts in hand, we can now complete our proof.
	\begin{proof}[Proof of Theorem \ref{theorem:nosection}]
		Let $m \geq 2$. If $n=1$, we have no section by Example \ref{ex:case1}. If $n=2$, we have a section by Example \ref{ex:case2}.  Otherwise, let $n \geq 3$. Assume that $\bar{s}$ is a section of $\bar{g}_{m,n}$ and let $s$ be its $\Sigma_n$-equivariant lift to a section of $g_{m,n}$. The assumption that $s$ is a section forces $s(B^1_n) \subseteq E^1_{n+1}$, thus it restricts to a section $s':B^1_n \to E^1_{n+1}$ of $g_{m,n}$. From this one observes that $(s')^* X_1=X_1$. Let us show that this leads to a contradiction.
		
		Since the classes $G_{ab}$ span $\Ho^{m-1}(\PConf_{n}(\DD);\ZZ)$, there is an expansion
		\[
		s^*(G_{01}) = \sum_{1<a\leq n} \lambda_{a}G_{1a} + \sum_{1< a< b \leq n}\delta_{ab} G_{ab}
		\]
		for some integer coefficients. Equivariance implies that a permutation $\sigma\in \Sigma_n$ fixing $1$ will preserve this expansion, and therefore $\lambda_a = \lambda_b$ for all $1<a<b\leq n$. Denote this constant value by $\lambda$. Similarly, it also follows that all $\delta_{ab}$ must be equal, say to the constant $\delta$. We thus get
		\begin{equation}\label{eq:expansion}
		s^*(G_{01}) = \lambda \sum_{1<a\leq n} G_{1a} + \delta \sum_{1<a<b\leq n} G_{ab}.
		\end{equation}	

		Next, we have a commutative diagram
		\[
		\xymatrix{
			E^{1}_{n+1} \ar[r]^{\iota\qquad} & \PConf_{n+1}(\DD) \\
			B^1_{n} \ar[u] ^{s'} \ar[r]^{\iota\qquad} & \PConf_{n}(\DD) \ar[u] ^s
		}
		\]
		through which the pullback of the class $G_{01}\in \Ho^{m-1}(\PConf_{n+1}(\DD))$ along the two different paths must agree. Pulling it back through the top-left corner,
		\[
		G_{01} \overset{\iota^*}\longmapsto X_1 \overset{(s')^*}\longmapsto X_1.
		\]
		But pulling back through the bottom right corner, one first applies $s^*$ for which we have the expansion \eqref{eq:expansion}. Since the restriction of $G_{ab}$ with $1<a,b$ to $B^1_n$ gives the class $G_{ab}$ again and each $G_{1a}$ restricts to $X_1$, we obtain the following:
		\[
		\iota^*s^*(G_{01})=\lambda(n-1)X_1+\delta \sum_{1<a<b\leq n} G_{ab}.
		 \]
		The above equation has to be equal to the pulling back from the top-left corner, which is $X_1$. But since the K\"{u}nneth formula for the product $B^1_n \cong S^{m-1}\times \PConf_{n-1}(\BB)$ implies that $X_i$ is linearly independent from the other classes $G_{ab}$, such an equality is possible only if $\delta=0$ and $\lambda(n-1)=1$. But $\lambda$ is an integer and $n > 2$ which gives a contradiction.
	\end{proof}
	
	\section{Uniqueness of the midpoint construction for $n=2$}
	\label{sec:n=2}
	
	We now show that the midpoint section from Example \ref{ex:case2} is unique up to a homotopy through sections.
	
	\begin{proof}[Proof of Theorem \ref{thm:uniqueness}]
		Let us denote the midpoint section by $M:\PConf_2(\DD)\to \PConf_{3}(\DD)$ and suppose that $s$ is another section of $g_{m,2}$, possibly $\Sigma_2$-equivariant. We construct a homotopy between $s$ and $M$ through sections, such that if $s$ was equivariant then so will be the homotopy.
		
		If for every configuration $s(p_1,p_2)=(p_0,p_1,p_2)$, the added point $p_0$ lies either between $p_1$ and $p_2$ or off of the line connecting them, then the straight-line homotopy $H_t:= (1-t)s+tM$ demonstrates the claim. With this, the uniqueness problem is reduced to finding a homotopy to a section possessing this property.
		
		The idea goes as follows. Given a configuration $(p_1,p_2)$ we let the points repel each other, moving outwards along the line connecting them until they hit the boundary. By applying the section $s$ to this motion, one gets a path of configurations of $3$ points, where at the end of the process we have $p_1$ and $p_2$ on the boundary and $p_0$ somewhere in $\DD$. Thus if $p_0$ is on the line containing $p_1$ and $p_2$, it must lie between them. On such a configuration one can perform the straight-line homotopy to the midpoint. However, to remain within sections of $g_{m,2}$, we must apply the aforementioned process while globally scaling $\DD$ down so that overall the points $p_1$ and $p_2$ do not move.
		
		Now more explicitly, for any configuration $c = (p_1,p_2)$, let $L^c$ be the straight line passing through it. Then $L^c$ intersects $\partial\DD$ at two distinct points $q_1$ and $q_2$, labeled so that $q_i$ is closer to $p_i$. Let $x^c$ be the unique point on the line $L^c$ from which the ratios of $\| q_i-x_c \|$ to $\| p_i-x_c \|$ are equal for $i=1,2$, and denote this common ratio by $r^c$. Then the isotopy 
		\[
		h_t^c:v\mapsto ((1-t)+r^c t)(v-x^c) +x^c
		\]
		is a scaling of $\RR^m$ out from $x^c$ at a linear rate, and $h_0^c = \operatorname{Id}$ while $h_1^c: p_i\mapsto q_i$ for $i=1,2$. Note that since $h^c_t$ is scaling out from a point inside $\DD$, the image of $\DD$ contains $\DD$ at every time $t$.
		
		Now, since $h^c_t$ and $(h^c_t)^{-1}$ are injective, they induce $\Sigma_n$-equivariant isotopies of the configuration spaces $(H^c_t)^{\pm 1}:\PConf_{n}(\RR^m)\to \PConf_{n}(\RR^m)$ by applying them to a configuration diagonally.  Lastly, it is clear that $H^c_t(p_1,p_2)$ is contained in $\DD$ at all times $0\leq t \leq 1$, and that everything in sight depends on $c$ continuously (even algebraically).
		
		A homotopy through sections is then given by
		\[
		s_t(c) := (H^c_t)^{-1}\circ s \circ H^c_t(c)
		\]
		This map acts by expanding the ball, using $s$ to add a point at every time $t$ and then contracting the ball back -- thus producing a path of configurations in which $(p_1,p_2)$ never moves. The section $s_1$ has the property that allows us to connect it to $M$ by a straight-line homotopy. 
		
		Lastly, if $s$ was $\Sigma_2$-equivariant, then $s_t$ is a composition of equivariant maps and is thus itself equivariant.\end{proof}

	\bibliography{disk}{}

\begin{thebibliography}{Coh88}

\bibitem[Arn69]{Arnold}
V.~I. Arnol'd.
\newblock The cohomology ring of the group of dyed braids.
\newblock {\em Mat. Zametki}, 5:227--231, 1969.

\bibitem[Bro11]{Brouwer}
L.~E.~J. Brouwer.
\newblock {\"U}ber {A}bbildung von {M}annigfaltigkeiten.
\newblock {\em Mathematische Annalen}, 71(1):97--115, Mar 1911.

\bibitem[Che17]{Chen}
L.~Chen.
\newblock Section problems for configuration spaces of surfaces.
\newblock Pre-print, https://arxiv.org/abs/1708.07921, 2017.

\bibitem[Coh88]{cohen1988artin}
Frederick~R Cohen.
\newblock Artin’s braid groups, classical homotopy theory, and sundry other
  curiosities.
\newblock {\em Braids (Santa Cruz, CA, 1986)}, 78:167--206, 1988.

\bibitem[CS17]{CS}
L.~Chen and N.~Salter.
\newblock Section problems for configurations of points on the {R}iemann
  sphere.
\newblock Pre-print, https://arxiv.org/abs/1807.10171, 2017.

\end{thebibliography}
	\bibliographystyle{alpha}

\end{document}